\begin{document}

\title{Almost order-weakly compact operators on Banach lattices%\thanks{Grants or other notes
%about the article that should go on the front page should be
%placed here. General acknowledgments should be placed at the end of the article.}
}
%\subtitle{Do you have a subtitle?\\ If so, write it here}

%\titlerunning{Short form of title}        % if too long for running head

\author{Mina Matin         \and
        Kazem Haghnejad Azar \and
        Ali Ebadi
}

%\authorrunning{M. Matin         \and K. Haghnejad Azar \and        R. Alavizadeh} % if too long for running head

\institute{Mina Matin \at
	{Department  of  Mathematics  and  Applications, Faculty of Science, University of Mohaghegh Ardabili, Ardabil, Iran} \\
	\email{minamatin1368@yahoo.com}
\and
Kazem haghnejad Azar \at
	{Department  of  Mathematics  and  Applications, Faculty of Science, University of Mohaghegh Ardabili, Ardabil, Iran} \\
	\email{haghnejad@uma.ac.ir}
\and
Ali Ebadi \at
	{Department  of  Mathematics  and  Applications, Faculty of Science, University of Mohaghegh Ardabili, Ardabil, Iran} \\
	\email{ali1361ebadi@gmail.com.ac.ir}
}

\date{Received: date / Accepted: date}
% The correct dates will be entered by the editor

\maketitle

\begin{abstract}
A continuous operator $T$ between two Banach lattices $E$ and
	$F$ is called almost order-weakly compact,
	whenever for each almost order bounded subset $A$ of $E$,  $T(A)$ is a relatively weakly compact subset of $F$. In Theorem \ref{ree}, we show that the positive operator $T$ from  $E$ into    Dedekind complete $F$  is almost order-weakly compact if and only if  $T(x_n) \xrightarrow{\|.\|}0$ in $F$ for each disjoint almost order bounded sequence $\{x_n\}$ in $E$.
	In this manuscript, we study some
	properties of this class of operators and its relationships with others known operators.
%%%%%%%%%%%%%%%%%%%%%%%%%%%%%%%%%%
\keywords{almost order bounded \and weakly compact \and order weakly compact \and almost order-weakly compact.}
% \PACS{PACS code1 \and PACS code2 \and more}
\subclass{47B60 \and 46A40}
\end{abstract}

\section{Introduction}
Since order weakly compact operators play important role in positive operators, our aim in this manuscript is to introduce and study a new class of operators as almost order-weakly compact operators and we establish some of its relationships with others known operators. Under some conditions, we show that the adjoint of any almost order-weakly compact operator is so. Every compact and weakly compact operators are almost order-weakly compact operator, but the converse in general not holds.

To state our results, we need to fix some notations and recall some definitions.
%Recall that a subset $A$ of Banach space $X$ is relatively weakly compact, whenever each sequence $\{x_n\}\subseteq \bar{A}$ has a subsequence that it is weak convergent to $x\in X$.
Let $E$ be a Banach lattice.  A subset $A$ is said to be almost order bounded if for any $\epsilon$ there exists $u\in E^+$ such that $A\subseteq [-u,u]+\epsilon B_E$ ($B_E$ is the closed unit ball of $E$). One should observe the following useful fact, which can be easily verified using Riesz decomposition Theorem, that $A\subseteq [-u,u] + \epsilon B_E$ iff $\sup_{x\in A}\|(|x|-u)^+\| = \sup_{x\in A} \| |x| -|x|\wedge u\| \leq \epsilon$. By Theorems 4.9 and 3.44 of \cite{1}, each almost order bounded subset in order continuous Banach lattice is relatively weakly compact.  $A\subseteq L_1(\mu)$ is relatively weakly compact iff it is almost order bounded (see\cite{9c}).
Recall that a vector $e > 0$ in vector lattice $E$ is an order unit or a strong unit (resp, weak unit) when the ideal $ I_{e}$ (resp, band $B_e$) is 
equal to $ E $; equivalently, for every $ x \geq 0 $ there exists
$ n \in \mathbb{N}$ such that  $ x \leq ne$ (resp,  $x\wedge ne \uparrow x$ for every $x\in E^+$).
Suppose that Banach lattice $E$ is an order continuous norm with a weak unit $e$. It is known that $E$ can be represented as a norm and order dense ideal in $L_1(\mu)$ for some finite measure $\mu$ (see \cite{5}). 
	A continuous operator $T$ from a Banach lattice $E$ to a Banach space $X$ is said to be
\begin{itemize}
	\item 
	\textbf{order weakly compact} whenever $T[0,x]$ is a relatively weakly compact subset of $X$ for each $x\in E^+$.
	\item 
	\textbf{$M$-weakly compact} if $T(x_n)\xrightarrow{\|.\|}0$ holds for every norm bounded disjoint sequence $\{x_n\}$ of $E$.
	\item 
	\textbf{$b$-weakly compact} whenever $T$ carries each $b$-order bounded subset of $E$ into a relatively weakly compact subset of $X$.
\end{itemize}
A continuous operator $T$ from a Banach space $X$ to a Banach lattice $E$ is said to be
\begin{itemize}
	\item 
	\textbf{$L$-weakly compact}  whenever $y_n\xrightarrow{\|.\|}0$ for every disjoint sequence $\{y_n\}$ in the solid hull of $T(B_X)$.
	\item \textbf{semicompact} whenever
	for each $\epsilon \geq 0$ there exists some $u\in E^+$ satisfying $\|(|Tx|-u)^+\|\leq \epsilon$ for all $x\in B_X$. 
\end{itemize}
An operator
$T : E \to F$ is regular if $T = T_1 - T_2$ where $T_1, T_2 : E \to F$ are positive operators. We denote by $L(E, F)$ ($L^r(E,F)$) the space of all operators (regular operators) from $E$ into $F$.

An operator $T:E\to F$ between two vector lattices is said to be lattice homomorphism (resp, preserve disjointness) whenever $T(x\vee y) = T(x)\vee T(y)$ (resp, $x \perp y$ in $E$ implies $T(x)\perp T(y)$ in $F$).

Recall that $L_b(E,F)$  is the vector space of all order bounded operators from $E$ to $F$.

A Banach space $X$ is said to be Grothendieck space whenever $weak^*$ and $weak$ convergence of sequences in $X^\prime$ (norm dual of $X$) coincide.

A Banach lattice $E$ is said to be $AM$-space (resp, $AL$-space), if for $x,y \in E$ with $x\wedge y =0$, we have $\|x\vee y\| = \max\{\|x\|,\|y\|\}$ (resp, $\|x+y\| = \|x\| + \|y\|$). A Banach lattice $E$ is said to be $KB$-space whenever every increasing norm bounded sequence of $E^+$ is norm convergent.

Let $E$ be a vector lattice and $x\in E$.
A net $\{x_{\alpha}\}\subseteq E$ is said to be unbounded order convergent to $x$ if
$ \vert  x_{\alpha} - x \vert  \wedge u \xrightarrow{o} 0$ for all
$ u \in E^{+}$. We denote this convergence by
$ x_{\alpha} \xrightarrow{uo}x $ and write that
$ \{x_{\alpha}\}_{\alpha} $ is $uo$-convergent to $x$.
\section{almost order bounded operators}
	Let $T:E\to F$ be a continuous operator between two Banach lattices. $T$ is said to be almost order-weakly compact operator whenever $T$ maps the almost order bounded subset $A$ of $E$ into an almost order bounded subset of $F$.\\
	The vector space of all almost order bounded operators from $E$ to $F$ will be denoned $L_{aob}(E,F)$.

It is obvious that if $T:E\to F$ is a semicompact operator, then it is almost order bounded. If $E$ has an order unit and $T:E\to F$ is order bounded, then it is an almost order bounded  operator and if $F$ has an order unit and $T$ is an almost order bounded opertor, then it is order bounded.

Here is an example show that the class of order bounded operators differ from the class of almost order bounded.
\begin{example}
%	The operator  $T:\ell^{1} \rightarrow \ell^{\infty} $ defined by 
%	\[
%	T(x_{1},x_{2},\ldots) = (\sum_{i=1}^\infty x_i , \sum_{i=1}^\infty x_{i},\ldots)
%	\]
%	is positive, therefore it is an order bounde operator. \\
%	Let $A\subseteq E$ be an almost order bounded. Hence $A$ is a relatively weakly compact subset of $E$. Because $T$ is continuous, so $T(A)$ also is relatively weakly compact. While not every relatively weakly compact in $\ell^\infty$ is almost order bounded.
The operator $T:L_1[0,1]\to c_0{\prime\prime}$ defined by
	\[
	T(f) = (\int_0^1f(x)sinxdx, \int_0^1 f(x) sin2x dx,\ldots)
	\]
	is not order bounded(see page 67 of \cite{1}). Let $A\subseteq L_1[0,1]$ be an almost order bounded. Hence $A$ is a relatively weakly compact subset of $E$. Because $T$ is continuous, so $T(A)$ also is relatively weakly compact. Since $c_0^{\prime\prime}$ is an $AL$-space, therefore by Theorem 4.27 of \cite{1}, it is lattice isometric by $L_1(\mu)$. Hence $T(A)$ is an almost order bounded subset of $c_0^{\prime\prime}$. So $T$ is an almost order bounded operator.
\end{example}
Here is an example that the operator $T$ is almost order bounded while whose modulus does not exist.
\begin{example}
Consider the continuous function $g:[0,1]\to [0,1]$ defined by $g(x) = x$ if $0 \leq x \leq \dfrac{1}{2}$ and $g(x) = \frac{1}{2}$ if $\frac{1}{2}< x \leq 1$. Now define the operator $T:C[0,1]\to C[0,1]$ by $Tf(x = f(g(x))) - f(\dfrac{1}{2})$. $T$ is a regular operator and therefore it is an order bounded operator. Because $C[0,1]$ is an $AM$-space with unit, so $T$ is an almost order bounded. Note that the modulus of $T$ does not exist(see Exercise 9 of page 22 of \cite{1}.).
\end{example}
We are looking for situations where if $T$ is an almost order bounded, then $\mid T \mid $ exist and it is an almost order bounded operator.
\begin{proposition}
Let  $T:E\to F$ be an almost order bounded operator between two Banach lattices that $F$ is Dedekind complete and $E, F$ havean order unit, then the modulus of $T$ exists and it is almost order bounded.	
\end{proposition}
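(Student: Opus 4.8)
The plan is to reduce everything to the Riesz–Kantorovich theorem together with the two implications recorded just before the examples in this section: that an almost order bounded operator into a Banach lattice with an order unit is order bounded, and that an order bounded operator out of a Banach lattice with an order unit is almost order bounded.

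First, since $F$ has an order unit and $T$ is almost order bounded, the second of those implications shows at once that $T$ is order bounded, i.e. $T \in L_b(E,F)$. Because $F$ is Dedekind complete, $L_b(E,F)$ is itself a Dedekind complete vector lattice by the Riesz–Kantorovich theorem, so the modulus $|T|$ exists and satisfies $|T|(x) = \sup\{\,|Ty| : |y| \le x\,\}$ for every $x \in E^+$. This already gives the first assertion; in particular $|T|$ is a positive, hence order bounded, operator.

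It remains to see that $|T|$ is almost order bounded, and here I would invoke the hypothesis that $E$ has an order unit: applying the first implication above to the order bounded operator $|T|$ yields that $|T|$ is almost order bounded, which finishes the proof.

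I do not anticipate a genuine obstacle, since the argument is carried entirely by the two stated facts and the Riesz–Kantorovich theorem. Were one to avoid citing those facts, the only substantive point is the standard observation that in a Banach lattice with order unit $e$ the lattice norm is equivalent to the order-unit norm, so that $B_E \subseteq [-\lambda e, \lambda e]$ for some $\lambda > 0$; this follows from a Baire category argument applied to the closed absorbing sets $n[-e,e]$. Granting this, on the domain side almost order bounded, order bounded and norm bounded sets coincide, while on the codomain side the positivity of $|T|$ sends an order interval $[-w,w]$ into $[-|T|w,\,|T|w]$, so that $|T|$ maps almost order bounded sets to order bounded (a fortiori almost order bounded) sets, making the proposition a direct unwinding of the definitions.
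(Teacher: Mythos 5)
Your proposal is correct and follows the paper's own proof essentially verbatim: use the order unit of $F$ to get $T$ order bounded, invoke the Riesz--Kantorovich theorem (Theorem 1.18 of Aliprantis--Burkinshaw) with the Dedekind completeness of $F$ to obtain $|T|$, and use the order unit of $E$ to conclude $|T|$ is almost order bounded. The extra unwinding of the two cited implications is sound but not needed beyond what the paper already records.
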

\begin{proof}
	Let $T$ be almost order bounded. Since $F$ has an order unit, therefore $T$ is an order bounded operator. Because $F$ is Dedekind complete, so by Theorem 1.18  of \cite{1}, $\mid T\mid$ exist and it is an order bounded. Since $E$ has an order unit, hence $\mid T \mid$ is an almost order bounded.
	\end{proof}
\begin{proposition}\label{ii}
	If $T:E\to F$ is an onto lattice homomorphism, then $T$ is almost order bounded.	
\end{proposition}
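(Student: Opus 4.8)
The plan is to read \emph{almost order bounded} in the sense of the definition opening this section, namely that $T(A)$ is almost order bounded in $F$ whenever $A$ is almost order bounded in $E$, and to produce, for each covering of $A$ by an order interval plus a small ball, an explicit covering of $T(A)$ of the same shape obtained by transporting the data through $T$. First I would record the two structural facts that make this work. Being a lattice homomorphism, $T$ is positive, since $Tx = T(x\vee 0) = (Tx)\vee 0 \ge 0$ for $x\ge 0$, and a positive operator between Banach lattices is automatically norm bounded; thus $\|T\|<\infty$. Being onto, $T$ carries the positive cone onto the positive cone, $T(E^+)=F^+$: if $v\in F^+$ and $v=Tx$, then $v=v^+=(Tx)^+=T(x^+)$ with $x^+\in E^+$. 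So the covering elements produced below are genuine positive elements of $F$, and in fact they exhaust $F^+$.

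Next I would fix an almost order bounded $A\subseteq E$ and $\epsilon>0$, and invoke the reformulation recalled in the introduction to obtain $u\in E^+$ with $\sup_{x\in A}\|(|x|-u)^+\|\le\epsilon$. The heart of the argument is a \emph{covering identity}: for every $x\in E$, using that a lattice homomorphism commutes with the modulus and with finite infima,
\[
(|Tx|-Tu)^+ = |Tx| - |Tx|\wedge Tu = T(|x|) - T(|x|\wedge u) = T\big((|x|-u)^+\big).
\]
Taking norms and using boundedness gives $\|(|Tx|-Tu)^+\|\le\|T\|\,\|(|x|-u)^+\|\le\|T\|\epsilon$ for all $x\in A$. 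With $v:=Tu\in F^+$, the reformulation applied in $F$ says exactly that $T(A)\subseteq[-v,v]+\|T\|\epsilon\,B_F$; since $\epsilon>0$ is arbitrary (replace $\epsilon$ by $\epsilon/\|T\|$ at the outset, the case $T=0$ being trivial), $T(A)$ is almost order bounded. Hence $T\in L_{aob}(E,F)$.

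The step I expect to carry the real weight is the covering identity, and this is precisely where the hypothesis is used in full: passing from $|Tx|-|Tx|\wedge Tu$ to $T(|x|-|x|\wedge u)$ requires $T|x|=|Tx|$ and $T(|x|\wedge u)=|Tx|\wedge Tu$, i.e.\ that $T$ is a lattice homomorphism and not merely positive, while surjectivity is what guarantees that $v=Tu$ is a bona fide positive element ranging over all of $F^+$, so that the covering genuinely lives in the target lattice. A secondary point to justify cleanly is the automatic boundedness of the positive operator $T$, needed so that the ball term $\|T\|\epsilon$ can be driven to $0$; for this I would cite the standard result of \cite{1} that positive operators between Banach lattices are continuous.
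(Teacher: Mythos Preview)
Your argument is correct and follows essentially the same route as the paper: use continuity (from positivity) together with the lattice-homomorphism identity $(|Tx|-Tu)^+ = T\big((|x|-u)^+\big)$ to transport the almost-order-bounded covering of $A$ to one for $T(A)$, with $v=Tu\in F^+$. Your write-up is in fact slightly more careful than the paper's about the factor $\|T\|$; one small remark is that $v=Tu\in F^+$ already follows from positivity of $T$, so surjectivity is not really doing work at that step (the paper is equally vague on this point).
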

\begin{proof}
	Let $T:E\to F$ be an almost order bounded and $A\subseteq E$ be an almost order bounded set. It means that for each $\varepsilon > 0$ there exists $u\in E^+$ that $\sup_{x\in A}\|(\mid x \mid - u)^+\|\leq \varepsilon$. Since $T$ is a positive operator therefore it is a continuous operator. Hence we have  for each $\varepsilon > 0$ there exists $u\in E^+$ that $\sup_{x\in A}\| T(\mid x \mid - u)^+\|\leq \varepsilon$. Since $T$ is a lattice homomorphism, therefore $\sup_{x\in A}\| (\mid T x \mid - Tu)^+\|\leq \varepsilon$ $\sup_{x\in A}\| T(\mid x \mid - u)^+\|\leq \varepsilon$. Because $T$ is onto, so the proof is complete.
\end{proof}
\begin{remark}
	If $T:E\to F$ is onto lattice homomorphism and $F$ is Archimedean, then $\mid T \mid$ exists and it is an almost order bounded.
	\end{remark}
\begin{proof}
	Since $T$ is lattice homomorphism, therefore it is an order bounded and disjointness preserving. Hence by Theorem 2.40 of \cite{1}, $\mid T \mid$ exists. It is obvious thst $\mid T \mid$ is a lattice homomorphism. By Proposition\ref{ii}, $\mid T \mid$ is an almost order bounded.
	\end{proof}
\section{ almost order-weakly compact operators}
	Let $T:E\to F$ be a continuous operator between two Banach lattices. $T$ is said to be almost order-weakly compact operator (for short, $ao$-$wc$ operator) whenever $T$ maps the almost order bounded subset $A$ of $E$ into a  relatively weakly compact subset of $F$.

By Theorem 3.40 of \cite{1}, $T$ is $ao$-$wc$ operator iff for every almost order bounded sequence $\{x_n\}$ of $E$ the sequence $\{T(x_n)\}$ has a weak convergent subsequence in $F$.

The collection of all $ao$-$wc$ operators between two Banach lattices $E$ and $F$ will be show by $K_{ao-wc}(E,F)$.

It is obvious that each compact and weakly compact operator is $ao$-$wc$ and each $ao$-$wc$ operator is an order weakly compact operator.

By Theorem 5.23 and 5.27 of   \cite{1}, we have the following result.
\begin{theorem}\label{gf}
	\begin{enumerate}
		\item Each continuous operator $T$ from a Grothendieck Banach lattice $E$ into a Banach lattice $F$ is an $ao$-$wc$ operator.
		\item Let $T$ be a positive operator from a Banach lattice $E$  into a Banach lattice $F$ and  $E^\prime$ has order continuous norm. If $F$ is a $KB$-space,
		then $T$ is $ao$-$wc$.
	\end{enumerate}
\end{theorem}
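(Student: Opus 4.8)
The plan is to deduce both statements from standard weak-compactness theorems for operators between Banach lattices, using throughout the elementary remark that an almost order bounded set is norm bounded. Indeed, taking $\epsilon = 1$ in the definition, $A \subseteq [-u,u] + B_E$ exhibits $A$ as a subset of a norm bounded set, so any operator that carries norm bounded sets to relatively weakly compact sets is \emph{a fortiori} $ao$-$wc$. Hence in each part it suffices to establish that $T$ is weakly compact; relative weak compactness of $T(A)$ for almost order bounded $A$ then follows at once. By the Eberlein--\v{S}mulian theorem (Theorem 3.40 of \cite{1}) one may, if preferred, argue sequentially, checking that $\{T(x_n)\}$ has a weakly convergent subsequence for every sequence $\{x_n\}$ drawn from an almost order bounded set.

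For part (1), I would invoke the characterization of Grothendieck Banach lattices recorded as Theorem 5.23 of \cite{1}: the Grothendieck property of the domain $E$ forces the relevant continuous operators out of $E$ to be weakly compact. Since the almost order bounded $A$ is norm bounded, applying this to $T$ yields that $T(A)$ is relatively weakly compact, which is exactly the assertion that $T$ is $ao$-$wc$. The step that must be handled with care is matching the precise hypotheses of Theorem 5.23 to the present situation, that is, verifying that the Grothendieck property of $E$ alone delivers weak compactness of $T$ for the codomain $F$ in question; I expect this to be the main obstacle, since it is here that the full strength of the Grothendieck property (coincidence of $weak^*$ and $weak$ sequential convergence in $E'$) must be converted into relative weak compactness of the image.

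For part (2), I would appeal to Theorem 5.27 of \cite{1}, to the effect that a positive operator $T \colon E \to F$ with $E'$ order continuous and $F$ a $KB$-space is weakly compact; granting this, the bounded image $T(A)$ is relatively weakly compact and $T$ is $ao$-$wc$. I would also keep in reserve a more self-contained route: a $KB$-space has order continuous norm, so by the remark following Theorems 4.9 and 3.44 of \cite{1} every almost order bounded subset of $F$ is already relatively weakly compact; it would then suffice to prove that the positive operator $T$ sends the almost order bounded set $A$ to an almost order bounded subset of $F$, i.e. that $T$ is an almost order bounded operator under the stated hypotheses, after which order continuity of $F$ finishes the argument. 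Either way, the decisive input is the weak-compactness theorem; once it is in force, the reduction to $ao$-$wc$ via norm boundedness of almost order bounded sets is routine.
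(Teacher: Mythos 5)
Your reduction --- almost order bounded sets are norm bounded, so any weakly compact operator is $ao$-$wc$ --- is exactly the route the paper takes; its entire proof is the one-line citation of Theorems 5.23 and 5.27 of \cite{1}. Part (2) of your proposal is correct and closes: Theorem 5.27 of \cite{1} states precisely that a positive operator from $E$ with $E'$ order continuous into a $KB$-space is weakly compact, and weak compactness gives the $ao$-$wc$ conclusion at once. (Your ``reserve'' route for (2) is not needed and is itself incomplete, since it would still require proving that $T$ carries almost order bounded sets to almost order bounded sets.)

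The obstacle you flag in part (1) is not a matter of bookkeeping that can be smoothed over; it is fatal to the statement in the generality claimed. The Grothendieck property of $E$ yields weak compactness of operators into $c_0$ (this is the content of the characterization in \cite{1}), not into an arbitrary Banach lattice $F$, and no argument can supply the missing strength because the assertion is false as written: $\ell^\infty$ is a Grothendieck space, the identity $I\colon \ell^\infty \to \ell^\infty$ is continuous, and $B_{\ell^\infty}=[-e,e]$ (with $e=(1,1,\dots)$) is order bounded, hence almost order bounded, while its image is not relatively weakly compact since $\ell^\infty$ is not reflexive. The paper itself records that this identity fails to be $ao$-$wc$ (see Remark \ref{rd}), and its own illustration of part (1) in Example \ref{elin} takes values in $c_0$. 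So part (1) is provable only after restricting the codomain (for instance to $c_0$, or to spaces into which every operator from a Grothendieck space is weakly compact); neither your proposal nor the paper supplies such a restriction.
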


In the following we have some examples of $ao$-$wc$ operators.
\begin{example}\label{elin}
	\begin{enumerate}
		\item 
		Since  $C[0,1]$ is a Grothendieck space, therefore by Theorem \ref{gf}(1), the continuous operator $T:C[0,1] \rightarrow c_0$, given by 
		$$ T(f) = (\int_0 ^1 f(x)\sin x dx, \int_0 ^1 f(x)\sin2x dx,\cdots),$$
		is an $ao$-$wc$ operator.
		\item 
		Since $c^\prime$ has order continuous norm and $\mathbb{R}$ is a $KB$-space, therefore by Theorem \ref{gf}(2), the functional $f:c\to \mathbb{R}$ defined by
		$$f(x_1,x_2,...)=\lim_{n\rightarrow \infty} x_n$$
		is an $ao$-$wc$ operator.
	\end{enumerate}
\end{example}
\begin{proposition}\label{lable}
	Let $E$, $F$ and $G$ be three Banach lattices,  $T:E\to F$ and $S:F\to G$ be two $ao$-$wc$ operators. By one of the following conditions, $S\circ T$ is an $ao$-$wc$ operator.
	\begin{enumerate}
		\item $F$ is an $AL$-space.
		\item $F$ has order continuous norm with a weak unit.
	\end{enumerate}	
\end{proposition}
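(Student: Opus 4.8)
The strategy is to reduce the composition to a single application of the hypothesis on $S$. Fix an almost order bounded set $A\subseteq E$. Since $T$ is an $ao$-$wc$ operator, the set $T(A)$ is relatively weakly compact in $F$. If I can promote this to the statement that $T(A)$ is \emph{almost order bounded} in $F$, then, $S$ being $ao$-$wc$, the image $S(T(A))=(S\circ T)(A)$ is relatively weakly compact in $G$, which is exactly what is required. Thus the whole argument rests on the implication ``relatively weakly compact $\Rightarrow$ almost order bounded'' inside $F$, and this is precisely where each of the two hypotheses enters.

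Under hypothesis (1), $F$ is an $AL$-space, so by Theorem 4.27 of \cite{1} it is lattice isometric to some $L_1(\mu)$. In $L_1(\mu)$ a subset is relatively weakly compact if and only if it is almost order bounded (the fact quoted in the introduction, see \cite{9c}). Transporting $T(A)$ through the lattice isometry, applying this equivalence, and transporting back shows that $T(A)$ is almost order bounded in $F$; the isometry guarantees that the defining estimate $\sup_{x}\|(|x|-u)^+\|\le\varepsilon$ is preserved in both directions. This settles case (1) cleanly, and then the reduction above finishes it.

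Under hypothesis (2), $F$ has order continuous norm and a weak unit, so by \cite{5} it can be represented as a norm and order dense ideal in some $L_1(\mu)$ with $\mu$ finite. The plan is the same: push $T(A)$ into $L_1(\mu)$, use that relative weak compactness is preserved by the (weak-to-weak continuous) inclusion, deduce almost order boundedness in $L_1(\mu)$, and pull the conclusion back to $F$. I expect this pull-back to be the main obstacle, because the ideal embedding $F\hookrightarrow L_1(\mu)$ is continuous but \emph{not} isometric, so an almost-order-bounded estimate measured in the $L_1$-norm does not automatically yield one in the stronger $F$-norm. Handling this carefully means invoking the order continuity of $F$ together with the density and ideal structure to control the tails $\|(|x|-u)^+\|_F$ uniformly over $x\in T(A)$, rather than merely their $L_1$-counterparts.

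Finally, I would record an alternative that sidesteps the delicate point altogether: every bounded operator is weak-to-weak continuous and hence sends relatively weakly compact sets to relatively weakly compact sets, so $S(T(A))$ is relatively weakly compact as soon as $T(A)$ is and $S$ is continuous. In that reading the conclusion persists with no structural assumption on $F$, and hypotheses (1)--(2) are needed only if one insists on invoking the defining property of $S$ as an $ao$-$wc$ operator, which demands an almost order bounded input. I would present the hypothesis-driven argument to match the statement, while flagging this simpler route as the reason the result is robust.
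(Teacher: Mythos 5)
Your main line of attack is exactly the paper's: show that $T(A)$, being relatively weakly compact in $F$, is in fact almost order bounded in $F$ via the $L_1(\mu)$ representation, and then feed it to $S$. For case (1) your argument coincides with the paper's and is sound, since a lattice isometry onto $L_1(\mu)$ transports both the modulus and the norm, and in $L_1(\mu)$ relative weak compactness is equivalent to almost order boundedness. For case (2) you have correctly put your finger on a gap that the paper's own proof glosses over: the paper simply asserts that $T(A)$ is almost order bounded in $F$ because $F$ is a norm and order dense ideal in $L_1(\mu)$, but that embedding is neither isometric nor norm-closed, and relative weak compactness does \emph{not} imply almost order boundedness in a general order continuous Banach lattice (the closed unit ball of $\ell^2$ is weakly compact but not almost order bounded, and $\ell^2$ has order continuous norm and a weak unit). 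You do not close that gap along the structural route, but your ``alternative'' does close the proposition entirely: a bounded operator is weak-to-weak continuous, hence maps relatively weakly compact sets to relatively weakly compact sets, so $S(T(A))$ is relatively weakly compact as soon as $T$ is $ao$-$wc$ and $S$ is merely continuous --- no hypothesis on $F$, and no need for $S$ to be $ao$-$wc$. This is the cleanest complete proof of the statement and shows the listed conditions are superfluous for the conclusion as stated; what those conditions actually buy (cleanly only in case (1)) is the stronger intermediate fact that $T$ carries almost order bounded sets to almost order bounded sets, which is what the paper's argument is really establishing. I would lead with your simple argument rather than relegate it to a remark.
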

\begin{proof}
	Let $A\subseteq E$ be almost order bounded. By assumption,  $T(A)$ is relatively weakly compact subset of $F$. If $F$ is an $AL$-space, then by Theorem 4.27 of \cite{1}, $F$ is lattice isometric to some concrete $L_1(\mu)$ and if $F$ has order continuous norm with a weak unit,  then $F$ is norm and order dense ideal in $L_1(\mu)$. Therefore $T(A)$ is an almost order bounded subset of $F$. So by assumption, $S(T(A))$ is relatively weakly compact subset of $G$. Hence $S\circ T$ is an $ao$-$wc$ operator.
\end{proof}
As following example the adjoint of $ao$-$wc$ operator in general is not $ao$-$wc$ operator.
\begin{example}
	Let $A\subseteq\ell^1$ be an almost order bounded set. Since $\ell^1$ has order continuous norm, therefore $A$ is relatively weakly compact. Thus the identity operator $I:\ell^1 \to \ell^1$ is an $ao$-$wc$ operator. Since the identity operator $I:\ell^\infty \to \ell^\infty$ is not order weakly compact, therefore  is not $ao$-$wc$.
\end{example}
In the following theorem, under some conditions, we show that the adjoint of $ao$-$wc$ operator is so.
\begin{theorem}\label{eli}
	Let $T:E\to F$ be an $ao$-$wc$ operator between two Banach lattices. If any of the following conditions are met, then $T^\prime$ is $ao$-$wc$.
\end{theorem}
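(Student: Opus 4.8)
The plan is to prove the conclusion not from scratch but by matching each of the listed hypotheses to a criterion already available for the operator $T^\prime:F^\prime\to E^\prime$. Since $E^\prime$ is automatically Dedekind complete, whenever $T$ (hence $T^\prime$) is positive the disjoint-sequence characterization of Theorem \ref{ree} applies to $T^\prime$, and the whole problem reduces to showing that $T^\prime(f_n)\xrightarrow{\|.\|}0$ in $E^\prime$ for every disjoint almost order bounded sequence $\{f_n\}$ of $F^\prime$. First I would rewrite the norm to be estimated in its dual form,
\[
\|T^\prime(f_n)\|_{E^\prime}=\sup_{x\in B_E}|\langle f_n, Tx\rangle|,
\]
so that the task becomes controlling the disjoint functionals $f_n$ against the image set $T(B_E)$. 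The point of $T$ being $ao$-$wc$ is that it controls $T$ on \emph{almost order bounded} subsets of $E$; the difficulty is that $B_E$ itself is in general not almost order bounded, so the hypothesis cannot be fed $T(B_E)$ directly, and it is precisely the extra conditions that must bridge this gap.

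Next I would treat the conditions case by case according to the mechanism each one supplies. If a hypothesis makes the domain $F^\prime$ a Grothendieck space, then Theorem \ref{gf}(1) applies verbatim to the continuous operator $T^\prime$ and the conclusion is immediate. If instead the hypothesis provides that $F^{\prime\prime}$ has order continuous norm while $E^\prime$ is a $KB$-space, then, $T^\prime$ being positive, Theorem \ref{gf}(2) applies to $T^\prime$ and again gives that $T^\prime$ is $ao$-$wc$. If the hypothesis grants order continuity of the norm of $F^\prime$, I would argue directly: for a disjoint almost order bounded $\{f_n\}$, the Riesz identity $|f_n|=|f_n|\wedge u+(|f_n|-u)^+$ recorded in the introduction together with the fact that order bounded disjoint sequences are norm null in an order continuous lattice forces $\|f_n\|\to 0$, whence $\|T^\prime(f_n)\|\le\|T^\prime\|\,\|f_n\|\to 0$; equivalently, almost order bounded subsets of $F^\prime$ are then relatively weakly compact (Theorems 4.9 and 3.44 of \cite{1}) and the bounded operator $T^\prime$ preserves relative weak compactness. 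Finally, any condition forcing $T$ itself to be weakly compact (for instance when $B_E$ is almost order bounded) yields $T^\prime$ weakly compact by Gantmacher's theorem, and every weakly compact operator is $ao$-$wc$.

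The step I expect to be the main obstacle is exactly the genuinely dual case, where one must use that $T$ is $ao$-$wc$ rather than bypass it: passing from "$T$ controls almost order bounded sets of $E$" to "the disjoint $f_n$ vanish uniformly on $T(B_E)$." Handling it requires the hypothesis to supply one of two mechanisms — either replacing the weak$^\ast$ behaviour of $\{f_n\}$ by honest weak convergence (the Grothendieck route, via Theorem 3.40 of \cite{1} to extract a weakly convergent subsequence), or transporting the almost order boundedness across the duality so that norm convergence of the disjoint sequence is forced by order continuity of the relevant dual norm. Once the correct hypothesis is paired with the appropriate mechanism, the residual estimates are routine applications of the identity $\sup_{x\in A}\|(|x|-u)^+\|\le\epsilon$, so the crux of the argument is the bookkeeping of which condition activates which of Theorem \ref{gf}(1), Theorem \ref{gf}(2), order continuity, or Gantmacher's theorem.
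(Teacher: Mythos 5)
The list of conditions attached to Theorem \ref{eli} (which sits outside the theorem environment in the source) is: (1) $E$ has an order unit; (2) $E^\prime$ is a $KB$-space and $F^\prime$ has an order unit. Your fourth mechanism --- a hypothesis making $B_E$ almost order bounded forces $T$ to be weakly compact, Gantmacher then gives $T^\prime$ weakly compact, hence $ao$-$wc$ --- is precisely the paper's proof of condition (1), since an order unit in $E$ makes every norm bounded subset order bounded and therefore almost order bounded. That case is handled correctly.

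Condition (2), however, is not reached by any mechanism on your list, and this is a genuine gap. The hypotheses there do not make $F^\prime$ a Grothendieck space; they do not give $F^{\prime\prime}$ or $F^\prime$ order continuous norm (an order unit together with order continuity would force $F^\prime$ to be finite dimensional); they do not make $B_E$ almost order bounded; and your opening reduction through Theorem \ref{ree} requires $T^\prime$ to be positive, which Theorem \ref{eli} does not assume. The bridge the paper actually uses, absent from your inventory, is the duality theory of \emph{order} weakly compact operators: every $ao$-$wc$ operator is order weakly compact; when $E^\prime$ is a $KB$-space, Theorem 3.3 of \cite{2c} transfers order weak compactness from $T$ to $T^\prime$; and since $F^\prime$ has an order unit, almost order bounded subsets of $F^\prime$ are already order bounded, so order weak compactness of $T^\prime$ is the same as $T^\prime$ being $ao$-$wc$. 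Without some such transfer of a weaker compactness property across the adjoint, your case analysis cannot close condition (2), because that is the one case where, as you yourself flag, neither weak compactness of $T$ nor any intrinsic property of the lattice $F^\prime$ does the work for you.
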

\begin{enumerate}
	\item $E$ has an order  unit.
	\item $E^\prime$ is a $KB$-space and $F^\prime$ has an order unit.
\end{enumerate}
\begin{proof}
	\begin{enumerate}
		\item Let $E$ has an order unit and $T:E\to F$ be $ao$-$wc$. If $A\subseteq E$ is norm bounded, then $A$ is an order bounded and therefore almost order bounded. Hence by assumption, $T(A)$ is a relatively weakly compact subset of $F$. It means that $T$ is a weakly compact operator. Therefore by Theorem 5.5 of \cite{2}, $T^\prime$ is weakly compact and hence it is an $ao$-$wc$ operator.
		\item Let  $T:E\to F$ be an $ao$-$wc$ operator. Therefore $T$ is an order weakly compact operator. Since $E^\prime$ is a $KB$-space, by Theorem 3.3  of \cite{2c}, $T^\prime$ also is an order weakly compact operator. Since $F^\prime$ has an order unit, it is clear that $T^\prime$ is $ao$-$wc$.
	\end{enumerate}	
\end{proof}
We know that each compact and weakly compact operator is an $ao$-$wc$ operator, but by following example the converse in general not holds.
\begin{example}
 The identity operator $I:\ell^1 \to \ell^1$ is an $ao$-$wc$ operator but  is not compact or weakly compact operator.
\end{example}
%Here by Theorem \ref{eli}, we want bring to remark that operator $T$ is weakly compact iff it is an $ao$-$wc$ operator.
\begin{corollary}\label{hg}
Under conditions of Theorem \ref{eli}, an operator $T:E\to F$ is weakly compact iff it is $ao$-$wc$.
\end{corollary}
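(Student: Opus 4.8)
The plan is to establish the two implications separately, noting that only one of them carries any content. The forward implication---that every weakly compact operator is $ao$-$wc$---requires no hypotheses at all and has already been recorded in the text (immediately before Theorem \ref{gf}), so I would simply invoke it. Consequently the entire substance of the corollary lies in the converse: under either hypothesis of Theorem \ref{eli}, an $ao$-$wc$ operator $T$ must in fact be weakly compact.

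Under condition (1), that $E$ has an order unit, I would reuse verbatim the reasoning already carried out in the proof of Theorem \ref{eli}(1). Since $E$ has an order unit, every norm bounded subset $A$ of $E$ is order bounded, and hence almost order bounded; applying the $ao$-$wc$ hypothesis to $A$ then shows that $T(A)$ is relatively weakly compact. As this holds for every bounded $A$, the operator $T$ is weakly compact, and nothing further is needed.

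Under condition (2), that $E^\prime$ is a $KB$-space and $F^\prime$ has an order unit, I would pass to the adjoint. First, Theorem \ref{eli}(2) already guarantees that $T^\prime : F^\prime \to E^\prime$ is $ao$-$wc$. Now the domain $F^\prime$ of $T^\prime$ has an order unit, so the exact argument of the previous paragraph---applied to $T^\prime$ in place of $T$---shows that $T^\prime$ is weakly compact. Finally, by Gantmacher's theorem (Theorem 5.5 of \cite{2}), an operator is weakly compact if and only if its adjoint is, whence $T$ itself is weakly compact. This closes the converse, and together with the forward implication it yields the stated equivalence.

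The only point that needs care, and the step I expect to be the main obstacle, is the bookkeeping in condition (2): one must notice that the order-unit hypothesis there sits on $F^\prime$, the \emph{domain} of the adjoint $T^\prime$, which is precisely what allows the condition-(1) argument to be reapplied one level up before one descends again via Gantmacher. Once this chaining is recognized, each individual link is routine.
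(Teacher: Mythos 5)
Your proposal is correct and follows essentially the same route as the paper: condition (1) is handled by the order-unit argument showing norm bounded sets are almost order bounded, and condition (2) by passing to $T^\prime$ via Theorem \ref{eli}, applying the order-unit argument to the domain $F^\prime$ of $T^\prime$, and descending with Gantmacher's theorem. No discrepancies to report.
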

\begin{proof}
%	Let $T:E\to F$ be weakly compact, it is clear that  is an $ao$-$wc$ operator.

Let $E$ has an order unit and operator $T:E\to F$ be $ao$-$wc$, then it is a weakly compact operator.\\
Let $E^\prime$ be a $KB$-space, $F^\prime$ has an order unit and operator $T:E\to F$ is $ao$-$wc$. By Theorem \ref{eli}, $T^\prime$ is $ao$-$wc$. Because $F^\prime$ has an order unit, $T^\prime$ is weakly compact. By Theorem 5.5 of \cite{2}, $T$ is weakly compact.
\end{proof}
\begin{remark}
	Let  $E$ be a Banach lattice with an order unit. Then a subset $A$ of $E$ is norm bounded iff is order bounded iff it is almost order bounded.  Therefore an  operator $T:E\to F$ is weakly compact if and only   if is order weakly compact if and only if is $ao$-$wc$.
\end{remark}
\begin{remark}
	Under  conditions of Theorem \ref{eli}, if $T:E\to F$ is $ao$-$wc$, then by Corollary \ref{hg} and Theorem 5.44 of \cite{1}, there exist a reflexive Banach lattice $G$, lattice homomorphism $Q:E\to G$ and positive operator $S:G\to F$ that $T= S\circ Q$.
\end{remark}
Note that the identity operator $I:\ell^\infty \to \ell^\infty$ is not $ao$-$wc$, however its adjoint $I:(\ell^\infty)^\prime \to (\ell^\infty)^\prime$ is  $ao$-$wc$.

%We are looking for conditions in which if $T^\prime$ is an almost order-weakly compact, then  $T$  also is almost order-weakly compact.
Let $T:E\to F$ be an operator between two Banach lattices. If $T^\prime :F^\prime \to E^\prime$ is $ao$-$wc$ and $F^\prime$ has an order unit, then $T^\prime$ is weakly compact and therefore $T$ is weakly compact. Hence $T$ is $ao$-$wc$.
If $T$ is $M$-weakly compact or $L$-weakly compact, then by Theorem 5.61 of \cite{1}, $T$ is weakly compact and therefore  $T$ is an $ao$-$wc$ operator. Thus we have the following result.
\begin{theorem}\label{fd}
	Let $T:E\to F$ be an operator between two Banach lattices. By one of the following conditions $T$ is an $ao$-$wc$ operator.
	\begin{enumerate}
		\item $T$ is $M$-weakly compact,
		\item $T$ is $L$-weakly compact, \\
		%	\item Dunford-Pettis operator,\\
		moreover if $F$ has order continuous norm and $T:E\to F$ is
		
	\end{enumerate}
\end{theorem}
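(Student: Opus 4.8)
The plan is to handle the two enumerated conditions together by routing both through weak compactness, exactly as the paragraph immediately preceding the statement already indicates. For (1) and (2) I would invoke Theorem 5.61 of \cite{1}, which asserts that every $M$-weakly compact operator and every $L$-weakly compact operator between Banach lattices is weakly compact. Since it was observed just after the definition of $ao$-$wc$ operators that every weakly compact operator is $ao$-$wc$, composing these two implications settles both cases simultaneously. No genuinely new argument is needed here; the content is the concatenation of two previously recorded facts, and the symmetry between the $M$- and $L$-weakly compact hypotheses means the two cases are dispatched by the identical chain.

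For the \emph{moreover} clause, whose natural completion under the stated hypotheses ($F$ of order continuous norm and $T$ almost order bounded in the sense of Section~2) is that $T$ is again $ao$-$wc$, I would argue straight from the definitions rather than through weak compactness of $T$. Fix an almost order bounded set $A\subseteq E$. Since $T$ is an almost order bounded operator, $T(A)$ is an almost order bounded subset of $F$. Then I would invoke the fact recorded in the Introduction (via Theorems 4.9 and 3.44 of \cite{1}) that in an order continuous Banach lattice every almost order bounded set is relatively weakly compact; applying this to $T(A)\subseteq F$ shows $T(A)$ is relatively weakly compact, which is precisely the defining property of an $ao$-$wc$ operator.

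The one point demanding care is matching each hypothesis to the correct cited result and keeping the two mechanisms separate. Parts (1) and (2) rest on weak compactness of $T$ itself and need Theorem 5.61 of \cite{1}, whereas the final clause is different in character: it uses order continuity of the \emph{range} $F$ together with almost order boundedness of $T$, and never asserts that $T$ is weakly compact. There is no serious obstacle; the only subtlety is not to conflate the order-continuity assumption on $F$, which is used solely for the \emph{moreover} clause, with the conditions governing the enumerated cases.
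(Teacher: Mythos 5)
Your treatment of parts (1) and (2) is exactly the paper's argument: Theorem 5.61 of \cite{1} gives weak compactness of $T$, and weakly compact operators are $ao$-$wc$ by the remark following the definition. The only divergence is in the \emph{moreover} clause, whose hypothesis is truncated in the statement. The paper's intended completion is ``$T$ is semicompact or dominated by a semicompact operator,'' not ``$T$ is an almost order bounded operator'' as you guessed; its argument runs: $A$ almost order bounded $\Rightarrow$ $A$ norm bounded $\Rightarrow$ $T(A)$ almost order bounded in $F$ (by semicompactness) $\Rightarrow$ $T(A)$ relatively weakly compact (by order continuity of $F$), with the dominated case reduced to the semicompact case via Theorem 5.72 of \cite{1}. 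Your mechanism is the same final two steps, and since the paper itself records in Section~2 that every semicompact operator is an almost order bounded operator, your version actually covers the semicompact case under a formally weaker hypothesis. What it does not cover is the ``dominated by a semicompact operator'' alternative, which genuinely needs the domination theorem (5.72 of \cite{1}) and does not follow from your argument alone, since an operator dominated by a semicompact one is not obviously almost order bounded without that theorem. So: (1) and (2) are fully correct and identical to the paper; the last clause is correct for the hypothesis you chose but, measured against the paper's intended statement, omits the domination case.
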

If $T:E\to F$ is semicompact operator, or
dominated by a semicompact opereator, then  $T$ is an $ao$-$wc$.	
Let $A$ be an almost order bounded subset of $E$. Then $A$ is norm bounded. Therefore if $T$ is a semicompact operator, $T(A)$ is an almost order bounded in $F$. Since $F$ has order continuous norm, $T(A)$ is relatively weakly compact subset of $F$. Hence $T$ is an $ao$-$wc$ operator.
If $T$ is  dominated by a semicompact operator, then by Theorem 5.72 of \cite{1}, $T$ is semicompact operator and so is an $ao$-$wc$ operator.

\begin{remark}\label{rd}
	\begin{enumerate}
		\item An $ao$-$wc$ operator need not be an $M$-weakly or $L$-weakly compact  operator. For instance, the identity operator $I : L_1[0,1] \to L_1[0,1]$ is $ao$-$wc$, but is not $M$-weakly or $L$-weakly compact operator.
		\item Note that if $F$ has not order continuous norm, then each semicompact operator $T:E\to F$ is not necessarily $ao$-$wc$. For example, the identity operator $I:\ell^\infty \to \ell^\infty$ is semicompact and $\ell^\infty$ has not order continuous norm. Thus $I$ is not $ao$-$wc$.
	\end{enumerate}
\end{remark}
Let $E$, $F$ be two normed vector lattices. Recall from \cite{99}, a continuous operator $T:E\to F$ is said to be $\sigma$-$uon$-continuous, if for each norm bounded $uo$-null sequence $\{x_n\}\subseteq E$ implies $T(x_n)\xrightarrow{\|.\|}0$. 

\begin{theorem}\label{ree}
	Let $E$ and $F$ be two Banach lattices that $F$ is Dedekind complete. The positive operator $T:E\to F$ is $ao$-$wc$ iff for each disjoint almost order bounded sequence $\{x_n\}$ in $E$ implies $T(x_n) \xrightarrow{\|.\|}0$ in $F$.
\end{theorem}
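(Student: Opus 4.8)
The plan is to deduce both directions from the classical characterisation of order weakly compact operators (Theorem 5.57 of \cite{1}): a continuous operator is order weakly compact exactly when it carries every order bounded disjoint sequence of $E^+$ to a norm-null sequence. The device linking that theorem to the present statement is the Riesz decomposition $|x| = |x|\wedge u + (|x|-u)^+$, read together with the identity recorded in the introduction, namely that $A\subseteq[-u,u]+\epsilon B_E$ means $\sup_{x\in A}\|(|x|-u)^+\|\le\epsilon$. This lets one split an almost order bounded element into an order bounded summand lying in $[0,u]$ and a norm-small remainder.

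For the forward implication I would start from the fact, already noted in the text, that every $ao$-$wc$ operator is order weakly compact, so $T$ is order weakly compact. Given a disjoint almost order bounded sequence $\{x_n\}$ and $\epsilon>0$, choose $u\in E^+$ with $\sup_n\|(|x_n|-u)^+\|\le\epsilon$. Positivity gives $\|Tx_n\|\le\|T|x_n|\|$, and the decomposition yields $\|T|x_n|\|\le\|T(|x_n|\wedge u)\|+\|T\|\,\epsilon$. Since $\{|x_n|\wedge u\}$ is disjoint and contained in $[0,u]$, the characterisation of order weakly compact operators forces $\|T(|x_n|\wedge u)\|\to0$, so $\limsup_n\|Tx_n\|\le\|T\|\,\epsilon$; letting $\epsilon\downarrow0$ gives $Tx_n\xrightarrow{\|\cdot\|}0$.

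For the converse I would first observe that a disjoint order bounded sequence of $E^+$ is disjoint and almost order bounded (take the bounding element as the $u$ in the definition), so the hypothesis combined with Theorem 5.57 of \cite{1} already makes $T$ order weakly compact; hence $T[0,u]$, and with it $T[-u,u]\subseteq T[0,u]-T[0,u]$, is relatively weakly compact for each $u\in E^+$. Now take an arbitrary almost order bounded $A\subseteq E$. For every $\epsilon>0$ pick $u_\epsilon$ with $A\subseteq[-u_\epsilon,u_\epsilon]+\epsilon B_E$, whence
\[
T(A)\subseteq T[-u_\epsilon,u_\epsilon]+\epsilon\,T(B_E)\subseteq K_\epsilon+\epsilon\|T\|B_F,\qquad K_\epsilon:=\overline{T[-u_\epsilon,u_\epsilon]}^{\,w},
\]
with $K_\epsilon$ weakly compact. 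Thus the bounded set $T(A)$ lies, for every $\epsilon$, in an $\epsilon$-neighbourhood of a weakly compact set, and I would conclude with the approximation criterion for relative weak compactness.

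The hard part is this last criterion, which is where I would spend the care: a bounded set that for each $\epsilon>0$ is contained in $K_\epsilon+\epsilon B_F$ with $K_\epsilon$ weakly compact is itself relatively weakly compact. I would prove it in the bidual. By Goldstine's theorem $\overline{T(A)}^{\,w^*}\subseteq K_\epsilon+\epsilon\|T\|B_{F^{\prime\prime}}$ in $F^{\prime\prime}$, so each weak$^{*}$ cluster point of $T(A)$ is within $\epsilon\|T\|$ of $K_\epsilon\subseteq F$; as $\epsilon\downarrow0$ this forces $\overline{T(A)}^{\,w^*}\subseteq F$, the standard test for $T(A)$ to be relatively weakly compact. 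Two features are worth flagging: positivity of $T$ enters only in the forward direction, through $\|Tx_n\|\le\|T|x_n|\|$, and the argument in fact shows that, for such $T$, the properties $ao$-$wc$, order weak compactness and the disjoint-sequence condition all coincide, the middle one being the classical content of Theorem 5.57 of \cite{1}.
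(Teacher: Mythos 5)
Your proof is correct, but it takes a genuinely different route from the paper's. For the forward direction the paper restricts $T$ to principal ideals $I_z$, argues that these restrictions are $M$-weakly compact, passes to $\sigma$-$uon$-continuity via Remark 2.8 of \cite{99}, and finishes by observing that a disjoint almost order bounded sequence is norm bounded and $uo$-null; you instead use the truncation $|x_n|=|x_n|\wedge u+(|x_n|-u)^+$ together with the disjoint-sequence characterization of order weak compactness (Theorem 5.57 of \cite{1}), which bypasses the $uo$-machinery entirely and, in particular, avoids the paper's delicate claim that $\sigma$-$uon$-continuity of $T\vert_{I_z}$ ``extends'' to all of $E$. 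For the converse the paper tries to exhibit an $M$-weakly compact restriction (the operator written there as $T:I_u\oplus E\to F$) and then invokes Theorem \ref{fd}, whereas you trap $T(A)$ inside $K_\epsilon+\epsilon\|T\|B_F$ with $K_\epsilon$ weakly compact and apply the Grothendieck approximation criterion for relative weak compactness, justified in the bidual via Goldstine; the key point you should make explicit is that $K_\epsilon+\epsilon\|T\|B_{F^{\prime\prime}}$ is weak$^*$ closed, being a sum of weak$^*$ compact sets, which is what lets you confine the weak$^*$ closure of $T(A)$ and conclude that it lies in $F$. Your version is cleaner and more self-contained: it uses neither the Dedekind completeness of $F$ nor (in the converse) the positivity of $T$, so it in fact proves a slightly stronger statement, at the cost of importing one classical bidual lemma rather than the $M$-weak compactness results the paper leans on.
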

\begin{proof}
	Let the operator $T:E\to F$ be  $ao$-$wc$. This means that for every $\epsilon$ there exists $u\in E^+$ such that $T([-u,u]+\epsilon B_E)$ is relatively weakly compact. Let $I_z$ be the ideal generated by $z\in[-u,u]+\epsilon B_E$ in $E$. The operator $T\vert _{I_z}:I_z \to F$ is weakly compact operator. Since $I_z$ is an $AM$-space with order unit, therefore $T\arrowvert_{I_z}:I_z\to F$ is $M$-weakly and hence by Remark 2.8 of \cite{99}, is $\sigma$-$uon$-continuous. It is clear that the extension of operator $T\vert _{I_z}$, $T:E\to F$ is $\sigma$-$uon$-continuous. If $\{x_n\}\subseteq E$ is almost order bounded and disjoint, hence it is norm bounded and $uo$-null. So we have $T(x_n)\xrightarrow{\|.\|}0$.	
	Conversely, let $ A\subseteq E$ be an almost order bounded set. Then for each $\epsilon$ there exists $u\in E^+$ such that $A\subseteq [-u,u]+\epsilon B_E$. Let $I_u$ be the ideal generated by $u$ in $E$ and $\{x_n\}\subseteq A$ be a disjoint sequence. It is clear that $\{x_n\}$ is norm bounded. By assumption, we have $T(x_n)\xrightarrow{\|.\|}0$ in $F$. Therefore $T:I_u \oplus E \to F$ is $M$-weakly compact, and so by Theorem \ref{fd}, $T:I_u \oplus E \to F$  is an $ao$-$wc$ operator. Thus $T:E\to F$ is $ao$-$wc$.
\end{proof}
\begin{corollary}
	%	According to Theorem \ref{ree}, we can generalize Proposition \ref{lable}. 
	\begin{enumerate}\label{elinn}
		\item Let $T:E\to F$, $S:F\to G$ be two $ao$-$wc$ operators where $F, G$ are Dedekind complete and $\{x_n\}\subseteq E$ is a disjoint almost order bounded sequence. By Theorem \ref{ree}, we have $T(x_n)\xrightarrow{\|.\|}0$. Since $S$ is a continuous operator, $ S(T(x_n))\xrightarrow{\|.\|}0$. Therefore $S\circ T$ is $ao$-$wc$ operator.
		\item	By Theorem 5.60 of \cite{1}, obviously that if $T:E\to F$ is an $ao$-$wc$ operator, then for each $\epsilon > 0$ there exists some $u\in E^+$ such that $\| T((|x|-u)^+)\|< \epsilon$ holds for all $x\in A$ that $A$ is an almost order bounded subset of $E$.
	\end{enumerate}
\end{corollary}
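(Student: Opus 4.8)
The plan is to treat the two parts separately: the composition via the disjoint-sequence characterization in Theorem \ref{ree}, and the tail estimate directly from the definition of almost order boundedness together with the boundedness of $T$.

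For part (1), the idea is to bypass the difficulty that $T(A)$ need not itself be almost order bounded (this is exactly why Proposition \ref{lable} needed extra structure on $F$) by instead verifying the disjoint-sequence criterion of Theorem \ref{ree} for the composite operator. Since $T$ and $S$ are positive, $S\circ T$ is positive, and since $F$ and $G$ are Dedekind complete, Theorem \ref{ree} is available both for $T$ (as an operator into $F$) and for $S\circ T$ (as an operator into $G$). First I would fix an arbitrary disjoint almost order bounded sequence $\{x_n\}$ in $E$. The forward implication of Theorem \ref{ree} applied to the $ao$-$wc$ operator $T$ gives $T(x_n)\xrightarrow{\|.\|}0$; because $S$ is continuous it carries norm-null sequences to norm-null sequences, so $S(T(x_n))\xrightarrow{\|.\|}0$. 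Since $\{x_n\}$ was arbitrary, the converse implication of Theorem \ref{ree}, applied to the positive operator $S\circ T$, yields that $S\circ T$ is $ao$-$wc$.

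For part (2), I would argue straight from the definitions. Let $A$ be almost order bounded and fix $\epsilon > 0$. By the elementary reformulation recorded in the introduction, almost order boundedness of $A$ furnishes, for each prescribed $\delta > 0$, some $u\in E^+$ with $\sup_{x\in A}\|(|x|-u)^+\|\le\delta$. Taking $\delta=\epsilon/(1+\|T\|)$ and the associated $u$, boundedness of $T$ gives $\|T((|x|-u)^+)\|\le\|T\|\,\|(|x|-u)^+\|\le\|T\|\,\delta<\epsilon$ simultaneously for all $x\in A$; this is precisely the claimed estimate, and Theorem 5.60 of \cite{1} is the packaged form of this reformulation.

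The step I expect to be the main obstacle is the implicit positivity hypothesis in part (1): Theorem \ref{ree} is stated only for positive operators, so chaining its two directions through $S\circ T$ forces the assumption that both $T$ and $S$ are positive (so that $S\circ T$ is positive as well). Without positivity the characterization is unavailable and one must retreat to the structural hypotheses on $F$ of Proposition \ref{lable}. By contrast, part (2) is genuinely routine, using only the boundedness of $T$ rather than any compactness property, so the real content of the corollary is confined to making the positivity and Dedekind-completeness bookkeeping of part (1) explicit.
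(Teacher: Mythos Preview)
Your proposal is correct and, for part (1), follows exactly the paper's own argument: the paper embeds its proof directly in the statement, applying Theorem \ref{ree} to $T$ to get $T(x_n)\xrightarrow{\|\cdot\|}0$, using continuity of $S$, and then invoking the converse direction of Theorem \ref{ree} for $S\circ T$. Your observation that this chain silently requires positivity of $T$ and $S$ (since Theorem \ref{ree} is stated only for positive operators) is a genuine clarification that the paper omits.

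For part (2) you take a slightly different and in fact sharper route. The paper simply points to Theorem 5.60 of \cite{1} (the $M$-weakly-compact tail estimate), implicitly relying on the $ao$-$wc$ hypothesis via the disjoint-sequence condition. Your argument is more elementary: you extract $u$ directly from the almost order boundedness of $A$ and then use only the norm bound $\|T\|$, which shows that the conclusion holds for \emph{any} bounded operator $T$, with the $ao$-$wc$ assumption playing no role. This is a legitimate simplification; what it buys is transparency about which hypotheses are actually used, at the cost of not connecting the statement to the $M$-weakly-compact machinery the paper is alluding to.
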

Recall that a Banach lattice $E$ is said to have the dual positive Schur property if every positive $w^*$-null sequence in $E^\prime$ is norm null.
\begin{theorem}\label{lk}
	The following statements are true.	
	\begin{enumerate}
		\item Let $E$  be a Banach lattice Dedekind complete. $E$ has order continuous norm iff each positive operator $T$ from $E$ into each Banach lattice $F$ is an $ao$-$wc$ operator.
		\item Let $E$  be a Banach lattice Dedekind complete. $E$ has order continuous norm iff each almost order bounded disjoint sequence $\{x_n\}\subseteq E$ is norm null.
		\item  If $E$ has the property $(b)$ and each operator $T^2:E\to E$ is $ao$-$wc$, then $E$ has order continuous norm.
		\item Let $T:E\to F$ be a continuous operator between two Banach lattices $E,F$ that $F$ is Dedekind complete. If $|T| $ exists and it is $ao$-$wc$, then $T$ is also $ao$-$wc$.
		\item If $E$ has the dual positive Schur propertry, $F$ has order continuous norm and Dedekind complete, then adjoint of each positive operator $T:E\to F$ is an $ao$-$wc$ operator.
	\end{enumerate}
\end{theorem}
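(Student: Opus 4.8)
I would handle the two order-continuity characterisations (1) and (2) together, since both rest on the standard criterion that a Banach lattice has order continuous norm exactly when every order bounded disjoint sequence is norm null, together with the weak compactness of order intervals (Theorem 4.9 of \cite{1}). For (1), the forward implication is immediate: if $E$ is order continuous then every almost order bounded set is relatively weakly compact (Theorems 4.9 and 3.44 of \cite{1}), so every continuous operator, in particular every positive one, carries such sets to relatively weakly compact sets and is $ao$-$wc$. For the converse I would specialise to $T=\mathrm{id}_E$, which is positive; the hypothesis forces every almost order bounded set, hence every order interval $[0,x]$, to be relatively weakly compact, and being norm closed and convex it is weakly compact, so $E$ is order continuous by Theorem 4.9 of \cite{1}. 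For (2), the forward direction uses the Riesz decomposition $|x_n|=|x_n|\wedge u+(|x_n|-u)^+$: the first summand is order bounded and disjoint, hence norm null by order continuity, while the second has norm at most $\epsilon$, giving $\|x_n\|\to 0$. Conversely, an order bounded disjoint sequence is almost order bounded, hence norm null, which is precisely the classical criterion for order continuity.

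For (4) the natural route is through Theorem \ref{ree} and a domination argument. Writing $T=T^+-T^-$ with $0\le T^\pm\le|T|$, I would first show that any positive operator $S$ with $0\le S\le|T|$ is $ao$-$wc$: for a disjoint almost order bounded sequence $\{x_n\}$ one has $\|Sx_n\|\le\|S(|x_n|)\|\le\||T|(|x_n|)\|$, and since $\{|x_n|\}$ is again disjoint and almost order bounded, Theorem \ref{ree} applied to $|T|$ gives $\||T|(|x_n|)\|\to 0$, whence $Sx_n\to 0$; as $F$ is Dedekind complete, Theorem \ref{ree} then yields that $S$ is $ao$-$wc$. Applying this to $T^+$ and $T^-$, and using that the $ao$-$wc$ operators form a vector space (the sum of two relatively weakly compact sets is relatively weakly compact), I conclude that $T=T^+-T^-$ is $ao$-$wc$.

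For (5), note first that $E'$ is automatically Dedekind complete and the adjoint $T':F'\to E'$ is positive, so by Theorem \ref{ree} it suffices to prove $\|T'f_n\|\to 0$ for every disjoint almost order bounded sequence $\{f_n\}$ in $F'$. The plan is to first show $|f_n|\xrightarrow{w^*}0$ in $F'$ and then transfer this through $T'$ using the dual positive Schur property of $E$. For the $w^*$-convergence I would split $|f_n|=|f_n|\wedge g+(|f_n|-g)^+$; the order bounded disjoint part $h_n:=|f_n|\wedge g$ is $w^*$-null because its partial suprema satisfy $\sum_{k\le n}h_k\le g$, and since each $y\in F$ acts as an order continuous functional on $F'$ we get $\sum_n h_n(y)\le g(y)<\infty$ and hence $h_n(y)\to 0$, while the remainder is uniformly small in norm. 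Consequently $(T'(|f_n|))(x)=|f_n|(Tx)\to 0$ for each $x\in E$, so $T'(|f_n|)\xrightarrow{w^*}0$; as $T'(|f_n|)\ge 0$, the dual positive Schur property yields $\|T'(|f_n|)\|\to 0$, and $|T'f_n|\le T'(|f_n|)$ gives $\|T'f_n\|\to 0$. Theorem \ref{ree} then shows $T'$ is $ao$-$wc$.

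Part (3) is the most delicate, and here I expect the real work to lie. The quickest argument observes that $\mathrm{id}_E$ is itself an operator with $\mathrm{id}_E^2=\mathrm{id}_E$, so the hypothesis makes $\mathrm{id}_E$ an $ao$-$wc$ operator; then every order interval is relatively weakly compact, hence weakly compact, and Theorem 4.9 of \cite{1} gives order continuity. Notably this route needs neither Dedekind completeness (which Theorem \ref{ree} would require) nor property $(b)$. If one insists on using property $(b)$ essentially, as in the standard ``squares'' theorems for $b$-weakly compact operators, the plan is contrapositive: assuming $E$ is not order continuous, extract a normalised positive disjoint order bounded sequence $0\le x_n\le u$, choose order bounded biorthogonal positive functionals $\{f_n\}$, and build an operator $T$ (a shift of the form $y\mapsto\sum_n f_n(y)\,x_{n+1}$) whose square keeps $\{x_n\}$ bounded away from $0$, contradicting Theorem \ref{ree}. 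The main obstacle is precisely the well-definedness of this $T$: in the absence of order continuity the disjoint series need not converge in norm, and it is property $(b)$ — guaranteeing that sums order bounded only in $E''$ are order bounded in $E$ — that must be invoked to keep $T$ continuous.
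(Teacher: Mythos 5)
Your proposal is correct, but in several parts it takes a genuinely different (and mostly more elementary) route than the paper. For (1) the paper argues the forward direction through the disjoint-sequence criterion of Theorem \ref{ree} (which formally needs the codomain Dedekind complete, whereas the statement allows an arbitrary $F$), and proves the converse by citing Theorem 2.7 of \cite{11} to produce a non--order-weakly-compact operator into $\ell^\infty$; your version --- almost order bounded sets are already relatively weakly compact when $E$ is order continuous, and for the converse test the hypothesis on $T=\mathrm{id}_E$ and use weak compactness of order intervals --- is self-contained, works for arbitrary $F$, and avoids the external citation. For (2) the paper derives the forward implication from part (1) plus Theorem \ref{ree}, while you give the direct Riesz-decomposition estimate; both are fine. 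For (3) the paper argues by contraposition via Wnuk's theorem \cite{32}, extracting a copy of $c_0$ with a positive projection $P$ and showing $T=i\circ P$ (which is its own square) fails to be order weakly compact, with property $(b)$ used to pass from $b$-weak compactness to order weak compactness; your observation that $T=\mathrm{id}_E$ already falls under the hypothesis and immediately forces order intervals to be weakly compact is valid and shows that, as literally stated, neither property $(b)$ nor the detour through $c_0$ is needed --- this suggests the intended statement probably involved a domination hypothesis, but against the statement as written your argument stands. Part (4) is essentially the paper's proof (domination plus Theorem \ref{ree}; the paper decomposes $T=(T+|T|)-|T|$ where you use $T^\pm$, an immaterial difference). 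For (5) the skeleton is the same, but the paper reduces to positive $f_n$ by an unjustified ``without loss of generality'' and invokes Theorem 2.1 of \cite{7c} (needing order continuity of $F$) to get $w^*$-convergence, whereas you work with $|f_n|$ throughout, prove $|f_n|\xrightarrow{w^*}0$ by the elementary partial-sums bound $\sum_{k\le n}|f_k|\wedge g\le g$, and close with $|T'f_n|\le T'|f_n|$; this is cleaner and does not actually use order continuity of $F$. No gaps; where you diverge from the paper you either simplify or repair its argument.
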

\begin{proof}
	\begin{enumerate}
		\item Let $E$ has order continuous norm and $\{x_n\}$ be an almost order bounded disjoint sequence in $E$. Therefore $x_n\xrightarrow{uo}0$ in $E$. By Proposition 3.7 of \cite{9c}, $x_n \xrightarrow{\|.\|}0$. By continuity of $T$, it follows that  $Tx_n\xrightarrow{\|.\|}0$ in $F$.
		
		Conversely, let $E$ has not order continuous norm. By Theorem 2.7 of \cite{11}, there exists an operator $T$ from $E$ into $\ell^\infty$ such that  $T$ is not order weakly compact and therefore  is not $ao$-$wc$. 
		\item Let $E$ has order continuous norm, therefore the identity operator $I:E\to E$ is $ao$-$wc$.  Then $x_n = Ix_n\xrightarrow{\|.\|}0$ where $\{x_n\}\subseteq E$ is almost order bounded disjoint sequence.
		
		Conversely, let $\{x_n\}$ be an order bounded disjoint sequence in $E$. Therefore $\{x_n\}$ is almost order bounded disjoint in $E$. Hence by assumption $x_n = Ix_n\xrightarrow{\|.\|}0$. By Theorem 4.14 of \cite{1}, $E$ has order continuous norm.
		\item By contradiction, assume that  $E$ has not order continuous norm, it follows from the proof of Theorem 2 of \cite{32},
		that $E$ contains a closed order copy of $c_0$ and there exists a positive projection $P : E \to c_0$.
		Let $i : c_0 \to E$ be the canonical injection.
		Obviously that $T = i \circ P : E \to E$ is not $b$-weakly compact. Since $E$ has property $(b)$, therefore $T$ is not order weakly compact, and so  $T^2$ is not $ao$-$wc$.
		\item Let $0\leq T \leq S$ and $S$ be an $ao$-$wc$ operator. If $\{x_n\} $ is an almost order bounded and disjoint sequence in $E$, then by Theorem \ref{ree}, $S(x_n)\xrightarrow{\|.\|}0$. Therefore $T(x_n)\xrightarrow{\|.\|}0$. We have $-|T|\leq T \leq |T|$ and so $0\leq T+|T|\leq 2|T|$. It follows that  $T$ is an $ao$-$wc$ operator whenever  $|T|$ is $ao$-$wc$. 
		\item 	Let $\{f_{n}\} $ be an almost order bounded disjoint sequence in $F^\prime$.  Then $ f_{n} \xrightarrow{uo} 0$ in $F^\prime$. Without loss of generality, assume $0\leq f_{n}$. Note that $ 0\leq T^{\prime} f_{n}$.  Now since $ F$ has order continuous norm, by Theorem 2.1 from \cite{7c}, $f_{n} \xrightarrow{w^{*}} 0$ in $ F^\prime$. Since $T^{\prime} $ is $w^{*}$-to-$w^{*}$ continuous, hence $T^{\prime} f_{n} \xrightarrow{w^{*}} 0$ in $E^{\prime}$. Since $E$ has the dual positive Schur property, hence $ T^{\prime} f_{n} \xrightarrow{\|.\|} 0$ in $E^{\prime}$.
	\end{enumerate}
\end{proof} 
\begin{proposition}
	If $E$ has an order unit, then $T:E\to F$ is $\sigma$-$uon$-continuous if and only if it is an $ao$-$wc$ operator. 
\end{proposition}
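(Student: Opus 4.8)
The plan is to route both implications through $M$-weak compactness, which sits between the two properties at issue: I would prove $\sigma$-$uon$-continuity $\Rightarrow$ $M$-weakly compact $\Rightarrow$ $ao$-$wc$ for the forward implication, and $ao$-$wc$ $\Rightarrow$ weakly compact $\Rightarrow$ $M$-weakly compact $\Rightarrow$ $\sigma$-$uon$-continuous for the converse. Note that the forward implication needs no hypothesis on $E$; the order unit is used only in the converse.

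For the forward direction, I would start from a norm bounded disjoint sequence $\{x_n\}\subseteq E$. Every disjoint sequence in a Banach lattice is $uo$-null --- the same fact already invoked in the proof of Theorem \ref{ree} --- so $\{x_n\}$ is a norm bounded $uo$-null sequence and $\sigma$-$uon$-continuity forces $T(x_n)\xrightarrow{\|\cdot\|}0$. This says exactly that $T$ is $M$-weakly compact, whence $T$ is $ao$-$wc$ by Theorem \ref{fd}(1).

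For the converse, I would assume $E$ has an order unit and $T$ is $ao$-$wc$. By the Remark following Corollary \ref{hg}, an order unit makes every norm bounded subset almost order bounded, so $T$ sends norm bounded sets to relatively weakly compact sets, i.e. $T$ is weakly compact; and a strong order unit makes $E$ an $AM$-space with unit. The decisive step is that a weakly compact operator from an $AM$-space with unit into a Banach lattice is $M$-weakly compact --- precisely the passage applied to the ideal $I_z$ in the proof of Theorem \ref{ree}. With $T$ now $M$-weakly compact, Remark 2.8 of \cite{99} delivers $\sigma$-$uon$-continuity and closes the equivalence.

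I expect the decisive step, weakly compact $\Rightarrow$ $M$-weakly compact on $E\cong C(K)$, to be the main obstacle, so I would supply a short justification rather than leave it implicit. Given a norm bounded disjoint $\{x_n\}$ with $|x_n|\le e$, disjointness gives $\|\sum_n a_n x_n\|=\sup_n|a_n|\,\|x_n\|$, so $R:c_0\to E$, $R(a)=\sum_n a_n x_n$, is well defined and bounded; then $T\circ R:c_0\to F$ is weakly compact, hence completely continuous because $c_0$ has the Dunford--Pettis property, and from $e_n\xrightarrow{w}0$ in $c_0$ I would conclude $T(x_n)=TR(e_n)\xrightarrow{\|\cdot\|}0$. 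Alternatively, since this is the very fact taken for granted in Theorem \ref{ree}, it may simply be cited there.
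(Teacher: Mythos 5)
Your proof is correct, and it is worth comparing with the paper's, which in fact only argues one direction. For $ao$-$wc$ $\Rightarrow$ $\sigma$-$uon$-continuous, the paper takes a shorter path to $M$-weak compactness: an $ao$-$wc$ operator is order weakly compact, the order unit turns every norm bounded disjoint sequence into an order bounded disjoint one, and Theorem 5.57 of \cite{1} (the disjoint-sequence characterization of order weak compactness) immediately gives $T(x_n)\xrightarrow{\|.\|}0$; Remark 2.8 of \cite{99} then finishes as in your argument. You instead pass through weak compactness and the fact that a weakly compact operator on an $AM$-space with unit is $M$-weakly compact, which you justify by factoring through $c_0$ and invoking the Dunford--Pettis property. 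Both are sound; the paper's route is more elementary (no renorming of $E$ as a $C(K)$, no Dunford--Pettis argument), while yours has the merit of making the ``weakly compact $\Rightarrow$ $M$-weakly compact on $AM$-spaces with unit'' step explicit rather than citing it. More importantly, the paper's proof stops there and never addresses the converse implication $\sigma$-$uon$-continuous $\Rightarrow$ $ao$-$wc$, which you supply cleanly (disjoint sequences are $uo$-null, so $\sigma$-$uon$-continuity forces $M$-weak compactness, hence $ao$-$wc$ by Theorem \ref{fd}), correctly observing that this direction needs no order unit. In that respect your write-up is more complete than the one in the paper.
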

\begin{proof}
	Let $T:E\to F$ be an $ao$-$wc$ operator, then $T$ is order weakly compact. Let $\{x_n\}\subseteq E$ be a norm bounded disjoint sequence. Since $E$ has an order unit, then $\{x_n\}$ is order bounded disjoint sequence. By assumption and  Theorem 5.57 of \cite{1}, $T(x_n)\xrightarrow{\|.\|}0$. So $T$ is $M$-weakly compact and therefore by remark 2.8 of \cite{99}, $T$ is $\sigma$-$uon$-continuous.
\end{proof}
In the following, we  establish some relationships between the class of $ao$-$wc$ operators and the class of semicompact operators. By Remark \ref{rd}, we know that the class of $ao$-$wc$ operators different with the class of semicompact operators, but as following we see some relations.
\begin{theorem}\label{hgg}
	Let $T:E\to F$ be an $ao$-$wc$ operator between two Banach lattices. Then $T$ is semicompact operator.
\end{theorem}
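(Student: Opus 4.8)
The goal is to establish that $T(B_E)$ is an almost order bounded subset of $F$, since this is precisely the definition of a semicompact operator. The plan is to reduce the quantity $\|(|Tx|-v)^+\|$ that governs semicompactness to the quantity $\|T((|x|-u)^+)\|$ that is already controlled by the $ao$-$wc$ hypothesis through Corollary \ref{elinn}(2). So the first task is a pointwise order estimate in $F$, and the second is to feed an almost order bounded set into Corollary \ref{elinn}(2) and transfer the uniform bound across that estimate.

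For the order estimate I would use positivity of $T$. From $Tx = Tx^+ - Tx^-$ one gets $|Tx|\le Tx^+ + Tx^- = T|x|$, and since $|x|-u\le (|x|-u)^+$ and $T$ is positive, $T|x|-Tu = T(|x|-u)\le T((|x|-u)^+)$; as the last term is positive it dominates the positive part, giving in $F$
\[
(|Tx|-Tu)^+ \;\le\; (T|x|-Tu)^+ \;=\; (T(|x|-u))^+ \;\le\; T((|x|-u)^+).
\]
Taking norms yields $\|(|Tx|-Tu)^+\|\le \|T((|x|-u)^+)\|$. Then, given $\epsilon>0$, I would invoke Corollary \ref{elinn}(2) to produce $u\in E^+$ with $\|T((|x|-u)^+)\|<\epsilon$ uniformly over an almost order bounded set, and set $v:=Tu\in F^+$; the displayed estimate would then give $\sup_x\|(|Tx|-v)^+\|\le\epsilon$, which is exactly semicompactness.

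The main obstacle is the \emph{range} over which the uniform bound of Corollary \ref{elinn}(2) is available: that corollary controls $\|T((|x|-u)^+)\|$ only for $x$ ranging over an \emph{almost order bounded} set $A$, whereas semicompactness demands the bound for all $x\in B_E$, and the unit ball $B_E$ is not almost order bounded in general. Consequently the argument only closes under a hypothesis forcing $B_E$ to be almost order bounded (for example, that $E$ has an order unit, as in the Remark following Corollary \ref{hg}), together with positivity of $T$ (used for $|Tx|\le T|x|$) and Dedekind completeness of $F$ (needed to apply Corollary \ref{elinn}(2) and Theorem \ref{ree}). Without such an assumption the statement as phrased is not available: the identity operator $I:\ell^1\to\ell^1$ and the identity operator $I:L_1[0,1]\to L_1[0,1]$ are both $ao$-$wc$ (see the example after Proposition \ref{lable} and Remark \ref{rd}), yet neither is semicompact, since neither unit ball is almost order bounded. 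I would therefore either incorporate the hypothesis that $E$ has an order unit (with $T$ positive and $F$ Dedekind complete), after which the two steps above give a complete proof, or restate the theorem accordingly.
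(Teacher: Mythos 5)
Your core computation is exactly the paper's: the paper also derives $(|Tx|-Tv)^+\le T((|x|-v)^+)$, obtains the uniform bound $\|T((|x|-v)^+)\|\le\epsilon$ over an almost order bounded set (via Theorem 4.36 of \cite{1} applied to the seminorm $p(x)=\|Tx\|$, which is the same content as Corollary 3.11(2)), and sets $w=Tv$. The difference is that you correctly refuse to take the last step, and your objection is sound: the paper's proof fixes an almost order bounded set $A$, proves $\sup_{x\in A}\|(|Tx|-w)^+\|\le\epsilon$, and then concludes ``therefore $T$ is semicompact'' --- but semicompactness requires this estimate over all of $B_E$, and $B_E$ need not be almost order bounded. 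What the argument actually shows is only that $T$ maps almost order bounded sets to almost order bounded sets, i.e.\ that $T$ is an almost order bounded operator in the paper's terminology (which is the first part of the corollary that follows). Your counterexamples are decisive: the identity on $\ell^1$ (or on $L_1[0,1]$) is $ao$-$wc$ by the paper's own examples, yet is not semicompact, since for $u\in(\ell^1)^+$ one has $\|(e_n-u)^+\|\to 1$ along the standard unit vectors. So the theorem as stated is false, and the gap is in the paper, not in your proposal.

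You are also right to flag positivity: the inequality $|Tx|\le T|x|$ used in the chain of estimates (both yours and the paper's) requires $T\ge 0$, an hypothesis absent from the theorem's statement. Your proposed repair --- assume $E$ has an order unit (so that $B_E$ is order bounded, hence almost order bounded) and $T$ positive --- does close the argument and yields a correct statement; under that hypothesis the conclusion is anyway immediate since $T$ is then weakly compact and one can apply the estimate to $A=B_E$ directly. In short: same method as the paper, but your analysis is more careful and identifies a genuine error in the published proof.
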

\begin{proof}
	Let $T:E\to F$ be $ao$-$wc$. Let $A$ be an almost order bounded subset of $E$.  Without loss of generality we assume that for each $\epsilon$ there exists $u\in E^+$ such that $A = [-u,u] +\epsilon B_E$. Let $p(x) = \|x\|$. Then $\lim p(Tx_n) = 0$ holds for each disjoint sequence $\{x_n\}$ in $A$. By Theorem 4.36 of \cite{1}, there exists some $v\in E^+$ satisfying $\|T(\vert  x\vert  -v)^+\|\leq \epsilon$ for all $x\in A$. Put $w = Tv\in F^+$, and note that
	$$(\vert  Tx\vert  -w)^+ =$$
	$$(\vert  Tx\vert  -Tv)^+ \leq $$
	$$ (T\vert  x\vert  -Tv)^+=$$
	$$ (T(\vert  x\vert  -v))^+\leq$$
	$$ T((\vert  x\vert -v)^+).$$
	 Therefore $T$ is a semicompact operator.
\end{proof}
By Theorems \ref{fd} and \ref{hgg}, we have the following result.
\begin{corollary}
	\begin{enumerate}
		\item Each operator $T:E\to F$ that it is $ao$-$wc$ is an almost order bounded operator.
\item	Let $F$ be a Banach lattice with order continuous norm. Then  $T:E\to F$ is $ao$-$wc$ if and only if it is a semicompact operator.
\end{enumerate}
\end{corollary}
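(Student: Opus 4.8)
The plan is to read both parts directly off the two results quoted immediately before the statement, Theorem~\ref{fd} and Theorem~\ref{hgg}, supplemented by the elementary remark made at the start of Section~2 that every semicompact operator is almost order bounded, and by the fact recalled in the introduction that almost order bounded subsets of an order continuous Banach lattice are relatively weakly compact. No new machinery is needed; the work is in chaining these together correctly.

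For part~(1) I would argue as follows. Since $T:E\to F$ is $ao$-$wc$, Theorem~\ref{hgg} gives that $T$ is semicompact. By the observation made at the beginning of Section~2, every semicompact operator is an almost order bounded operator, so $T$ is almost order bounded. This is a one-line composition of the two facts and requires no hypothesis on $F$.

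For part~(2) the two implications come from the two theorems separately. The ``only if'' direction is precisely Theorem~\ref{hgg}: any $ao$-$wc$ operator is semicompact, and again this needs no assumption on $F$. For the ``if'' direction I would take $T$ semicompact with $F$ order continuous, pick an almost order bounded $A\subseteq E$, note that $A$ is norm bounded, and use semicompactness of $T$ to conclude that $T(A)$ is almost order bounded in $F$. Because $F$ has order continuous norm, every almost order bounded subset of $F$ is relatively weakly compact, so $T(A)$ is relatively weakly compact, which is exactly the assertion that $T$ is $ao$-$wc$. This is the same reasoning already recorded in the discussion following Theorem~\ref{fd}.

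Since the statement is essentially a bookkeeping of earlier results, I do not expect a genuine obstacle. The only point deserving a little care is the passage from the definition of semicompactness, which is phrased on the closed unit ball $B_E$, to the almost order boundedness of $T(A)$ for an arbitrary norm bounded $A$: here one writes $A\subseteq r B_E$ for some $r>0$ and observes that a positive dilate of an almost order bounded set is again almost order bounded (replacing the dominating element $u\in F^+$ by $ru$). With this routine rescaling in place, both parts follow at once from Theorems~\ref{fd} and~\ref{hgg}.
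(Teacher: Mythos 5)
Your proposal is correct and follows exactly the route the paper intends: part (1) chains Theorem~\ref{hgg} with the Section~2 observation that semicompact operators are almost order bounded, and part (2) combines Theorem~\ref{hgg} with the argument recorded in the discussion after Theorem~\ref{fd} (norm boundedness of almost order bounded sets, semicompactness, and relative weak compactness of almost order bounded sets under order continuity). Your explicit rescaling remark for passing from $B_E$ to an arbitrary norm bounded set is a detail the paper glosses over, but it is the same argument.
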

If $T$ is $ao$-$wc$, in general $\vert  T\vert  $ is not exist, see the following example.
\begin{example} The operator $T:L_1[0,1]\to c_0$ defined by 
	$$ T(f) = (\int_0 ^1 f(x)\sin x dx, \int_0 ^1 f(x)\sin2x dx,\cdots),$$
	is an $ao$-$wc$ operator. Note that by Exercise 10 of page 289 of \cite{1}, its modulus does not exist.
\end{example}
In the following theorem, under some conditions, we show that $\vert  T\vert  $ exist and is $ao$-$wc$ whenever $T$ is $ao$-$wc$.

Recall that a Banach lattice $E$ is said to have the
property $(P)$ if there exists a positive contractive projection $P : E^{\prime\prime} \to E$ where $E$
is identified with a sublattice of its topological bidual $E^{\prime\prime}$.
\begin{theorem}\label{el}
	Let $T:E\to F$ be an $ao$-$wc$ operator. By one of the following conditions, the modulus of $T$ exists and it is an $ao$-$wc$ operator.
\end{theorem}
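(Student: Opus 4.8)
The plan is to establish the two conclusions in turn: first that $|T|$ exists, and then that it is again $ao$-$wc$. For existence, I would invoke Theorem~\ref{hgg}, by which $T$ is semicompact, so that $T(B_E)$ is almost order bounded in $F$; each of the listed hypotheses is arranged to promote this to order boundedness of $T$ itself (an order unit on $E$ makes almost order bounded sets order bounded, while property~$(P)$, through the positive contractive projection on the bidual, lets one control $T$ by a regular operator). Once $T$ is order bounded, the Dedekind completeness of $F$ together with the Riesz--Kantorovich theorem (Theorem~1.18 of \cite{1}) yields $|T|\in L_b(E,F)$, with $|T|\geq 0$ and the formula $|T|x=\sup\{Ty:|y|\le x\}$ valid for every $x\in E^{+}$.

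For the second conclusion I would reduce to a disjointness statement. Since $|T|$ is positive and $F$ is Dedekind complete, Theorem~\ref{ree} applies: it suffices to show $|T|x_{n}\xrightarrow{\|.\|}0$ for every disjoint, almost order bounded sequence $\{x_{n}\}$ in $E$. Replacing each $x_{n}$ by $|x_{n}|$ preserves disjointness and almost order boundedness, so I may assume $x_{n}\ge 0$ for all $n$.

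The technical core is the estimate of $\| |T| x_{n} \| = \| \sup\{Ty : |y|\le x_{n}\} \|$. Using the order-continuity content of the hypotheses to realise this Riesz--Kantorovich supremum as a norm-limit of admissible images, I would, for each $n$, select a vector $y_{n}$ with $|y_{n}|\le x_{n}$ whose image approximates $|T|x_{n}$ in norm to within $2^{-n}$; such a selection produces a sequence that is again disjoint (being dominated by the disjoint $x_{n}$) and almost order bounded (since $(|y|-u)^{+}\le(x_{n}-u)^{+}$ whenever $|y|\le x_{n}$). To this auxiliary sequence I would then apply the forward half of the argument of Theorem~\ref{ree}, which does not use positivity of $T$: on the $AM$-space with unit generated by a dominating element, $T$ restricts to a weakly compact, hence $M$-weakly compact and $\sigma$-$uon$-continuous, operator, so the norm-bounded $uo$-null sequence $\{y_{n}\}$ is carried to a norm-null one. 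Feeding this back through the approximation gives $\| |T| x_{n} \|\to 0$, and Theorem~\ref{ree} then yields that $|T|$ is $ao$-$wc$.

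The decisive obstacle is that neither semicompactness nor the $ao$-$wc$ property is stable under taking moduli, so $\| |T| x_{n} \|$ cannot be dominated by the norm of a single image $\|Ty_{n}\|$ without further input; this is exactly where the two hypotheses become indispensable. Order continuity of the norm of $F$ is what converts the directed supremum $\sup\{Ty:|y|\le x_{n}\}$ into a norm-limit and so permits its approximation by admissible images, while property~$(P)$ supplies, via the projection $P:E^{\prime\prime}\to E$, both the order boundedness needed for $|T|$ to exist and a bidual route to the same approximation when the order-continuity assumption is placed on $E^{\prime}$ rather than on $F$. I expect the careful verification that the approximating vectors can be chosen simultaneously disjoint and uniformly almost order bounded -- so that Theorem~\ref{ree} is legitimately applicable -- to absorb most of the work.
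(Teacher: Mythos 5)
Your proposal has a genuine gap at its technical core, and it also misreads how the three hypotheses are meant to be used. The decisive flaw is the approximation step: you propose to choose, for each $n$, some $y_n$ with $|y_n|\le x_n$ such that $Ty_n$ approximates $|T|x_n=\sup\{Ty:|y|\le x_n\}$ in norm to within $2^{-n}$. That supremum is only an order limit of finite suprema of admissible images; it is norm-approximable by such elements essentially only when $F$ has order continuous norm, and none of the three hypotheses supplies that (property $(P)$ holds, for instance, for $\ell^\infty$ and every dual Banach lattice; in cases (2) and (3) nothing is assumed about the norm of $F$). Moreover, even granting the selection, smallness of $\|Ty_n\|$ for one admissible $y_n$ does not control $\||T|x_n\|$: the element $|T|x_n$ dominates $|Ty|$ for \emph{every} $|y|\le x_n$, and the norm of a supremum of a family of small-norm positive elements can be large (already in $\ell^1$ the supremum of $n$ disjoint positive elements of norm $\epsilon$ has norm $n\epsilon$). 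Your existence argument is also off target: in cases (1) and (2) the space $F$ is not assumed Dedekind complete, so the Riesz--Kantorovich formula is unavailable; in case (1) the modulus exists because $L(E,F)=L^r(E,F)$ whenever $E$ is an $AL$-space and $F$ has property $(P)$ (Schaefer), not because $T$ is order bounded.

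The paper treats the three cases separately, each time importing the modulus from a structural theorem that simultaneously hands over the $ao$-$wc$ property, so no approximation of the Riesz--Kantorovich supremum is ever needed. In (1), $|T|$ exists by Schaefer's theorem and is $ao$-$wc$ simply because it is a positive operator on a space with order continuous norm ($E$ is an $AL$-space), by Theorem \ref{lk}. In (2), the order unit of $E$ makes $T$ weakly compact, Schmidt's theorem gives a weakly compact modulus, and weakly compact implies $ao$-$wc$. In (3), Meyer's theorem for order bounded disjointness-preserving operators gives $|T|$ together with the identity $|T|(|x|)=|T(x)|$, which reduces the claim to $T(x_n)\xrightarrow{\|.\|}0$ via $||T|(x_n)|\le |T|(|x_n|)$. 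Your plan would need exactly the identity of case (3), or the order continuity of case (1), at the point where you invoke neither.
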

\begin{enumerate}
	\item $E$ is an $AL$-space and $F$ has the propery $(P)$.
	\item  $E$ and $F$ have order unit.
	\item $F$ is Archimedean Dedekind complete and $ T $ is an order bounded preserves disjointness.
\end{enumerate}
\begin{proof}
	\begin{enumerate}
		\item By Theorem 1.7 of \cite{shef}, we have $L^r(E,F) = L(E,F)$. Therefore $\vert  T\vert $ exists. Since $E$ has order continuous norm,  by Theorem \ref{lk}, $\vert  T\vert :E\to F$ is an $ao$-$wc$ operator.
		\item Since $E$ has an order unit,  $T$ is a weakly compact operator. Since $F$ has an order unit, therefore by Theorem 2.3 of \cite{15}, the modulus of $T$ exists and it is a weakly compact operator. It is obvious that $\vert  T\vert $ is an $ao$-$wc$ operator.
		\item  By Theorem 2.40 of \cite{1}, 
		$ \vert  T\vert  $ exists and for all $x$, we have $\vert  T\vert  (\vert  x\vert ) =  \vert  T(\vert  x\vert )\vert  = \vert  T(x)\vert $.	
		If $\{x_n\} \subseteq E$ is an almost order bounded disjoint sequence, then by assumption $T(x_n)\xrightarrow{\|.\|}0$. 
		For each $n$, we have
		\(
		\vert  T\vert  (\vert  x_n\vert )=\vert  T(\vert  x_n\vert )\vert =\vert  T(x_n)\vert  \xrightarrow{\|.\|} 0
		\)
		in $ F$. The inequality $ \vert  (\vert  T \vert  (x_{n})) \vert \leq \vert  T \vert  \vert  x_n \vert  $,
		implies that   $$ \vert  T\vert  (x_n) \xrightarrow{\|.\|} 0.$$ Hence $\vert  T\vert $ is an $ao$-$wc$ operator.
	\end{enumerate}
\end{proof}
\begin{theorem}
	Let $E$ and $ F$ have order unit with  $F$  Dedekind complete. Then  $K_{ao-wc}(E,F)\cap L_b(E,F)$ is a band in $L_b(E,F)$.
\end{theorem}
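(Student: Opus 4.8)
The plan is to verify the three defining properties of a band inside the vector lattice $L_b(E,F)$, which is indeed a (Dedekind complete) vector lattice precisely because $F$ is Dedekind complete: namely that $K_{ao-wc}(E,F)\cap L_b(E,F)$ is a linear subspace, that it is an order ideal (solid), and that it is order closed. First I would record the simplifications forced by the order units. Since $E$ has an order unit, the Remark following Corollary \ref{hg} tells us that almost order bounded, order bounded and norm bounded subsets of $E$ coincide; in particular every $ao$-$wc$ operator is continuous and, because $F$ also has an order unit, order bounded, so $K_{ao-wc}(E,F)\subseteq L_b(E,F)$ and the intersection is just the family of $ao$-$wc$ operators viewed inside $L_b(E,F)$. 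The linear-subspace assertion is then routine, since finite sums and scalar multiples of relatively weakly compact subsets of $F$ are again relatively weakly compact.

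For the ideal property, suppose $T\in K_{ao-wc}(E,F)\cap L_b(E,F)$ and $S\in L_b(E,F)$ with $|S|\leq |T|$. Because $E$ and $F$ have order units, Theorem \ref{el}(2) guarantees that $|T|$ exists and is $ao$-$wc$, and since $F$ is Dedekind complete $|S|$ exists as well, with $0\leq |S|\leq |T|$. Now I would run the domination argument used in the proof of Theorem \ref{lk}(4): for a disjoint almost order bounded sequence $\{x_n\}$ the sequence $\{|x_n|\}$ is again disjoint, almost order bounded and positive, so Theorem \ref{ree} applied to the positive $ao$-$wc$ operator $|T|$ gives $\||T|(|x_n|)\|\to 0$; from $\bigl\||S|(x_n)\bigr\|\leq \bigl\||S|(|x_n|)\bigr\|\leq \bigl\||T|(|x_n|)\bigr\|$ we obtain $\||S|(x_n)\|\to 0$, whence $|S|$ is $ao$-$wc$ by the converse half of Theorem \ref{ree}. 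Finally Theorem \ref{lk}(4) applied to $S$ shows that $S$ itself is $ao$-$wc$, so the intersection is solid.

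The main work, and the step I expect to be the genuine obstacle, is order closedness. Take a net $0\leq T_\alpha\uparrow T$ in $L_b(E,F)$ with every $T_\alpha$ in $K_{ao-wc}(E,F)$; I must show the positive supremum $T$ is $ao$-$wc$. By Theorem \ref{ree} it suffices to prove $\|T x_n\|\to 0$ for every disjoint almost order bounded sequence $\{x_n\}$, and after replacing $x_n$ by $|x_n|$ I may assume $0\leq x_n\leq u$ for a fixed $u\in E^{+}$. Splitting $Tx_n = T_\alpha x_n + (T-T_\alpha)x_n$, the first summand tends to $0$ in norm as $n\to\infty$ for each fixed $\alpha$, because $T_\alpha$ is $ao$-$wc$; the problem is to control the remainder uniformly in $n$. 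Here the natural estimates are $0\leq (T-T_\alpha)x_n\leq (T-T_\alpha)u =: g_\alpha$ with $g_\alpha\downarrow 0$ in $F$, combined with the disjointness of $\{x_n\}$, which yields $\sum_{n} x_n\leq u$ and hence $\sum_n (T-T_\alpha)x_n\leq g_\alpha$.

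The crux is to convert the order statement $g_\alpha\downarrow 0$ into a uniform norm bound $\sup_n\|(T-T_\alpha)x_n\|\to 0$, so that one can first choose $\alpha$ making the remainder small for all $n$ at once and then let $n\to\infty$ to annihilate $T_\alpha x_n$. This interchange of the order limit in $\alpha$ with the disjoint-sequence limit in $n$ is the delicate point of the whole argument, and it is precisely where one must invoke an order-continuity-type property of the range together with the disjoint-sum control above; I would expect the bulk of the technical effort, and the most careful hypotheses-tracking, to be concentrated in establishing this uniform tail estimate. Granting it, $\|Tx_n\|\to 0$ follows, so $T$ is $ao$-$wc$, the ideal is order closed, and $K_{ao-wc}(E,F)\cap L_b(E,F)$ is a band in $L_b(E,F)$.
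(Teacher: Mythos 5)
Your subspace and ideal steps are sound and essentially reproduce the paper's argument: the paper likewise reduces the ideal property to the domination estimate $\vert S(x_n)\vert\leq \vert S\vert(x_n)\leq \vert T\vert(x_n)$ for a positive disjoint almost order bounded sequence, using Theorem \ref{el}(2) to produce $\vert T\vert$ and Theorem \ref{ree} to test against disjoint sequences; your detour through $\vert S\vert$ and Theorem \ref{lk}(4) is if anything slightly more careful, since Theorem \ref{ree} is stated only for positive operators.

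The genuine gap is exactly where you suspected it would be, and it cannot be filled. You reduce order closedness to the uniform tail estimate $\sup_n\|(T-T_\alpha)x_n\|\to 0$, which via $0\leq (T-T_\alpha)x_n\leq (T-T_\alpha)u=g_\alpha$ would follow from $\|g_\alpha\|\to 0$; but $g_\alpha\downarrow 0$ holds only in order, and $F$ is merely Dedekind complete with an order unit, so its norm is not order continuous (an infinite-dimensional $AM$-space with unit never has order continuous norm). The missing estimate is in fact false: take $E=F=\ell^\infty$ and $T_m=P_m$, the projection onto the first $m$ coordinates. Each $P_m$ is finite rank, hence compact, positive and $ao$-$wc$, and $P_m\uparrow I$ in $L_b(\ell^\infty,\ell^\infty)$, yet $I$ is not $ao$-$wc$ by the paper's own Remark \ref{rd}(2); concretely, with $u=e$ and $x_n=e_n$ one has $\|(I-P_m)e_n\|=1$ for every $n>m$. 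So the order-closedness claim, and with it the theorem as stated, fails. For comparison, the paper's own proof disposes of this step in one line by asserting that the positive supremum $T$ is automatically $ao$-$wc$ because it is order bounded and $E$ has an order unit (citing Example \ref{elin}); that assertion is contradicted by the same identity operator on $\ell^\infty$, so the paper's argument is no better here. You correctly isolated the obstruction, but ``granting it'' is not an option: the lemma you need is false under the stated hypotheses, and closing the argument would require an extra assumption such as order continuity of the norm of $F$, which is incompatible with $F$ having an order unit in infinite dimensions.
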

\begin{proof}
	It is obvious that if $T, S \in K_{ao-wc}(E,F)\cap L_b(E,F)$ and $\alpha \in \mathbb{R}$, then $T+S,   \alpha T	\in K_{ao-wc}(E,F)\cap L_b(E,F)$. 
	
	Let $\vert  S\vert   \leq \vert  T\vert  $ where $T\in K_{ao-wc}(E,F)\cap L_b(E,F)$, $S\in L_b(E,F)$ and $\{x_n\}\subseteq E$ be almost order bounded disjoint sequence. Without loss of generality, assume that $x_n\geq 0$ for all $n$. By Theorem \ref{el}, $\vert  T\vert  (x_n)\xrightarrow{\|.\|}0$. The inequalities $\vert  S(x_n)\vert \leq \vert  S\vert (x_n)\leq\vert  T\vert  (x_n)$ implies that $S(x_n)\xrightarrow{\|.\|}0$. Therefore $S\in K_{ao-wc}(E,F)\cap L_b(E,F)$, and so $K_{ao-wc}(E,F)\cap L_b(E,F)$ is an ideal of $L_b(E,F)$.
	
	Now let $0\leq T_{\alpha}\uparrow T$ in $L_b(E,F)$ with $\{T_{\alpha}\}\subseteq K_{ao-wc}(E,F)\cap L_b(E,F)$. Since $T$ is positive, therefore $T$ is order bounded and since $E$ has an order unit, then by Example \ref{elin}, $T$ is $ao$-$wc$. Hence $T\in K_{ao-wc}(E,F)\cap L_b(E,F)$.
\end{proof}

\end{document}